\newcommand{\R}{\mathbb{R}}
\begin{document}

\title{Time-Optimal Control Problem With State Constraints In A Time-Periodic Flow Field
\thanks{This work was supported by FCT (Portugal): support of FCT R\&D Unit SYSTEC --
POCI-01-0145-FEDER-006933/SYSTEC funded by ERDF $|$ COMPETE2020 $|$ FCT/MEC $|$ PT2020
extension to 2018, and NORTE-01-0145-FEDER-000033, by ERDF $|$ NORTE 2020.
Results described in Chapter~\ref{sec: numercial results} were obtained
during research visit of R. Chertovskih to the Federal Research Center
``Computer Science and Control'' (Russian Academy of Sciences)
supported by the Russian Science Foundation (project no. 19-11-00258).
}
}

\titlerunning{Time-Optimal Control Problem In A Time-Periodic Flow Field}

\author{Roman Chertovskih\inst{1}\orcidID{0000-0002-5179-4344} \and
Nathalie T. Khalil\inst{1}\orcidID{0000-0002-4761-5868} \and
Fernando L. Pereira\inst{1}\orcidID{0000-0002-9602-2452}}
\authorrunning{Chertovskih, Khalil, Pereira}
%
\institute{Research Center for Systems and Technologies (SYSTEC), Electrical and Computer
	Engineering Department, Faculty of Engineering, University of Porto, 4200-465, Portugal
\email{roman@fe.up.pt, khalil.t.nathalie@gmail.com, flp@fe.up.pt}\\
\url{https://www.nathaliekhalil.com/, https://paginas.fe.up.pt/~flp/}}

\maketitle              

\begin{abstract}
This article contributes to a framework for a computational indirect method based on the Pontryagin maximum principle to efficiently solve a class of state constrained time-optimal control problems in the presence of a time-dependent flow field.
Path-planning in a flow with tidal variations is an emblematic real-life task that serves as an example falling
in the class of problems considered here. Under rather natural assumptions on the flow field, the problem is
regular and the measure Lagrange multiplier, associated with the state constraint, is continuous.
These properties (regularity and continuity) play a critical role in computing the field of extremals by solving
the two-point boundary value problem given by the maximum principle. Moreover, some sufficient conditions
preventing the emergence of singular control processes are given.
A couple of examples of time-periodic fluid flows are studied and the corresponding optimal solutions are found.

\keywords{
path-planning \and
optimal control \and
state constraints \and
indirect method \and
Pontryagin maximum principle \and
two-point boundary value problems
}
\end{abstract}

\section{Introduction}

In this article, we consider a state-constrained time-optimal control problem in the presence of a time-periodic flow field, the so-called ``navigation problem''. We are interested in computing its set of extremals using an indirect method based on the necessary optimality conditions provided by the maximum principle
\cite{Pontryagin_1962}. However, indirect methods represent significant challenges for optimal control problems with
state constraints. Indeed, the difficulty is due to the fact that the state constraint Lagrange multiplier appearing
in the necessary optimality conditions whenever the state constraint becomes active is a mere Borel measure. Thus,
in general, this multiplier is discontinuous, and this leads to serious difficulties in computing the set of extremals at the times in which the state trajectory meets the boundary of the state constraint set.
Therefore, in order to overcome this difficulty, we employ a not so commonly used form of the maximum principle - the
so-called Gamkrelidze form - and impose a regularity condition on the data of the problem that entails the continuity of the measure Lagrange multiplier. This is crucial to ensure the appropriate behavior of the proposed numerical
procedure to find the corresponding set of extremals.

Moreover, we also present certain conditions that, once satisfied, prevent the emergence of singular control
processes. These may be helpful in guiding the computational procedures, by enabling to check the absence of singular controls.

To better grasp the proposed indirect method in the framework of regular problems, we study a navigation problem
in $\mathbb R^2$. More precisely, we consider an object moving in a closed state domain, subject to a time-dependent
fluid flow vector field. The dynamics of the proposed model is affine in the control variable whose values are
constrained to the unit square in $\R^2$, and is affected by the vector flow field action. For this model,
we are interested in computing the optimal time of a path which connects two given distinct initial and terminal
points. For the problem in question, the regularity condition is satisfied under non-restrictive conditions on the
vector flow field. In order to develop the proposed indirect method, we derive the corresponding necessary
optimality conditions in the non-degenerate Gamkrelidze form, from which the expression of the optimal control is
computed.
Moreover, the regularity of the problem entails an explicit expression for the measure multiplier. The points where
the extremal trajectories reach the boundary of the state constraint can be computed as a result of the continuity of
the measure Lagrange multiplier. The two expressions - of the control and the measure multiplier - are functions of
the state and adjoint variables, and are replaced in the associated two-point boundary value problem, solved via a
shooting algorithm. From the set of all extremals, only the optimal ones, i.e., with the minimal time, are selected as
solutions to the given time-optimal problem. We discuss some examples of time-periodic vector flow fields, and
we plot the corresponding set of extremals.

The proposed numerical approach based indirect method was discussed in \cite{Khalil et al. IEEE TAC 2019} for the steady flow field case. Our paper extends the analysis to time-periodic flow fields. The dependence of the flow on time is crucial for many realistic path planning problems. For example, tides play an important role in shaping water
velocity fields in rivers, mainly near their mouth~\cite{jos}. Moreover, in our paper we discuss, with more details, sufficient conditions for the non-occurrence of singular controls for a particular choice of control set.

The area of state-constrained optimal control problems has been widely investigated in the literature, cf. \cite{Gamkrelidze_1960,Berkovitz_1962,Warga_1964,Gamkrelidze_1965,Dubovitskii_Milyutin_1965,Hestenes_1966,Neustadt_1966,Neustadt_1967,Halkin_1970,Russak_1970,Ioffe_Tikhomirov_1979}. Questions related to the non-degeneracy of the necessary optimality conditions can be found in \cite{Arutyunov_Tynyanskiy_1985,Dubovitskii_Dubovitskii_1985,Vinter_Fereira_1994,Arutyunov_Aseev_1997,Arutyunov_2000,Vinter_2000,Arutyunov_Karamzin_Pereira_2011,Bettiol_Khalil_2016}. Issues on the continuity of the measure multiplier are extensively studied in \cite{Hager_1979,Maurer_1979,Milyutin_1990,Galbraith_Vinter_2003,Bonnans_2009,Arutyunov_2012,Arutyunov_Karamzin_2015,Arutyunov_Karamzin_Pereira_2018}. Numerical methods for computing the set of extremals in the presence of state constraints can be found in \cite{Bryson_1969,Jacobson_1969,Betts_1993,Fabien_1996,Maurer_2000,Vasiliev_2002,Pytlak_2006,Haberkorn_2011,Keulen_2014}. For indirect methods, we refer the reader to \cite{Jacobson_1969,Fabien_1996,Maurer_2000} \cite{Vasiliev_2002,Pytlak_2006,Haberkorn_2011,Keulen_2014}, among many others.

The article is organized as follows: the formulation of the time-optimal navigation problem is described in Section \ref{sec: problem formulation} and the regularity concept is discussed. In Section \ref{sec: maximum pronciple}, the necessary optimality conditions in the Gamkrelidze's form are given in a non-degenerate form. Section
\ref{sec: applications} is devoted to the application of the maximum principle to the problem in question when the
control set is constrained to the unit square in $\R^2$ and to derive the explicit formulas for the corresponding
measure multiplier and extremal control. Sufficient conditions for the non-occurrence of singular controls are also discussed.
Numerical results and the description of the algorithm are featured in Section \ref{sec: numercial results}. Section \ref{sec: conclusion} concerns a conclusion, and the Appendix contains detailed proofs of the key results.

\section{Problem Formulation: Navigation Problem}\label{sec: problem formulation}

We consider an object driven by a dynamical system in a two-dimensional time-space dependent flow field $v(t,x)$, and while subject to affine state constraints. The ultimate goal is to compute a control process that yields the minimum  transit time between two given starting and final points $A$ and $B$ within the set of extremals, i.e., the set of control processes satisfying the maximum principle conditions. The corresponding problem is described as follow:

\begin{equation}
\begin{aligned}\label{problem}
& {\text{Minimize}}& & T  \\
& \text{subject to} & &  \dot x = u + v(t,x), \\
&&& x(0)=A,\;\;x(T)=B, \\
&&&  |x_1|\le 1, \\
&&& u \in U: = \{ u \ : \ \varphi(u) \leq 0   \},
\end{aligned}
\end{equation}
where $x={\it col}(x_1,x_2)\in AC ([0,T]; \R^2)$, and $u={\it col}(u_1,u_2)\in L^1([0,T]; \R^2)$ are, respectively, the state and the control variables. The point $A$ is the starting point, while $B$ is the terminal point, and $v:[0,T] \times \mathbb{R}^2 \to \mathbb{R}^2$ is a smooth map which defines a fluid flow varying in time and space. Moreover, $\varphi=(\varphi_1, \varphi_2)$ is a given vector-valued mapping  representing the control constraint set $U\subset \R^2$. The state constraint set is represented by the inequality $|x_1| \le 1$.
The terminal time $T$ is to be minimized by the optimal control process.

\subsection{Regularity Condition}

For the problem (\ref{problem}) we consider here, the function $\Gamma(t,x,u): [0,T] \times \R^2 \times \R^2 \to \R$, defined by the scalar product of the gradient of the function defining the considered active inequality state constraint and the corresponding dynamics, as

\[ \Gamma(t,x,u) := u_1 + v_1(t,x)  . \]

Denote by $I_\varphi(u)$ the set of $i$'s such that $\varphi_i(u)=0$ ($i=1,2$). In the sequel of the regularity concept in \cite{Arutyunov_Karamzin_2015}, we state the following definition:
\begin{definition}\label{def: regularity}
Assume that, for all $t\in [0,T]$, $x\in\mathbb R^2$ and $u\in \R^2$, such that $|x_1|=1$, $\varphi(u)=0$ and $\Gamma(t,x,u)=0$, the set of vectors $\pdv{\Gamma}{u}$ and $\nabla \varphi_i$, for all $i\in I_\varphi(u)$ is linearly independent. Then, we say that problem (\ref{problem}) is regular with respect to the state constraint.
\end{definition}

As it will be explained in the coming sections, the regularity of the problem is, in the context of our paper, crucial for the appropriate behavior of the numerical proposed approach at points in which the trajectory meets the boundary of the state constraint set. The regularity condition might seem restrictive. However, for a large class of engineering problems it is automatically satisfied under natural assumptions. An example will be featured in Section \ref{sec: applications} for a specific case of control set $U$.

\section{Maximum Principle}\label{sec: maximum pronciple}

In this section, we derive non-degenerate necessary optimality conditions in the Gamkrelidze's form
for problem (\ref{problem}).
We start by considering the extended time-dependent Hamilton-Pontryagin function
$$
\bar H(t,x,u,\psi,\mu,\lambda) = \langle\psi,u + v(t,x)\rangle - \mu \Gamma(t,x,u) -\lambda,
$$
where $\psi \in \mathbb R^2$, $\mu \in \mathbb R$ and $\lambda \in \mathbb R$.

In order to satisfy the notation in what follows, we denote by $f^*(y,z)$ the function $ f(x,y,z)$ in which $x$ is
replaced by the reference value $x^*$.

\begin{theorem}\label{theorem: maximum principle}
We assume that problem (\ref{problem}) is regular in the sense of Definition \ref{def: regularity}. Then, for an optimal process $(x^*,u^*,T^*)$, there exist a set of Lagrange multipliers: a number $\lambda \in [0, 1]$, an absolutely continuous adjoint arc $\psi=(\psi_1,\psi_2) \in W_{1,\infty}([0, T^*]; \mathbb R^2)$, and a scalar function $\mu(\cdot)$, such that:
\begin{itemize}
\item[(a)]\label{item: adjoint system}Adjoint equation
    \begin{align*} \dot \psi(t) & = -\frac{\partial{\bar H}}{\partial x}(t,x^*(t),u^*(t),\psi(t),\mu(t),\lambda)  
    \qquad {\textrm for \ a.a. }\ t\in [0,T^*];
    \end{align*}
\item[(b)]\label{item: max condition} Maximum condition
    \begin{align*}u^*(t) & \in \mathop{\rm argmax}_{\varphi(u) \le 0} \left\{{\bar H} (t,x^*(t),u,\psi(t),\mu(t),\lambda) \right\} \qquad {\textrm for \ a.a. }\ t\in [0,T^*];
    \end{align*}
\item[(c)]\label{item: conservation law} Time-transversality condition
    \[ h(T^*)=0 \; \mbox{ where }\; h(t):= 	\max_{\varphi(u) \le 0} \left\{ \bar H^*(t,u) \right\} \; \mbox{ satisfies } \; \dot h= \pdv{\bar H^*}{t} (t)\]
    a.a. $ t \in [0,T^*]$;
\item[(d)]\label{item: measure continuity} $\mu(t)$ is constant on the time intervals where $ |x^*(t)|< 1 $,
	increasing on  $\{t\in [0,T]:x_1^*(t)=-1\}$, and decreasing on $\{t\in [0,T]: x_1^*(t)=1\}$.
	Moreover, $\mu(\cdot)$ is continuous on $[0,T^*]$;
\item[(e)] \label{item: nontriviality condition} Non-triviality condition
    $$\begin{array}{ll} &\lambda + |\psi_1(t)-\mu(t)| + |\psi_2(t)|  > 0,\end{array} \qquad  \text{for all }\ t\in [0,T^*].$$
\end{itemize}
\end{theorem}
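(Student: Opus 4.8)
The plan is to obtain the statement as a specialization of a general Gamkrelidze-form maximum principle for state-constrained problems (in the spirit of \cite{Gamkrelidze_1960,Arutyunov_Karamzin_2015,Arutyunov_Karamzin_Pereira_2018}), tailored to the affine dynamics and the scalar constraint $|x_1|\le 1$ of problem (\ref{problem}). First I would rewrite the state constraint as the pair of inequalities $g_+(x):=x_1-1\le 0$ and $g_-(x):=-x_1-1\le 0$, and observe that along any admissible trajectory the total time-derivative of the active constraint coincides, up to sign, with $\Gamma(t,x,u)=u_1+v_1(t,x)=\dot x_1$. This is exactly the quantity entering the extended Hamiltonian $\bar H$ through the term $-\mu\Gamma$, so the Gamkrelidze multiplier $\mu(\cdot)$ appears as a scalar function (rather than a mere measure), whose sign encodes which branch of the boundary is active. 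Since the cost is $T$, the running cost is identically $1$, which is why $\bar H$ carries the additive term $-\lambda$ with a normalized $\lambda\in[0,1]$.

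Next I would check that the regularity hypothesis of Definition \ref{def: regularity} is precisely the constraint qualification demanded by the general theorem: at every $(t,x,u)$ with the state and control constraints simultaneously active and $\Gamma=0$, linear independence of $\partial\Gamma/\partial u$ together with the active gradients $\nabla\varphi_i$, $i\in I_\varphi(u)$, makes the index set well-posed and the multiplier rule applicable. With this in place, the general result delivers directly the adjoint equation (a), the pointwise maximum condition (b), and a non-triviality relation. The specific shape of (e), namely $\lambda+|\psi_1(t)-\mu(t)|+|\psi_2(t)|>0$, I would recover by recalling that in the Gamkrelidze form the variable $\psi$ is related to the classical costate $p$ by a shift of the form $p=\psi-\mu\,\nabla g$; because the constraint depends only on $x_1$ we have $\nabla g_\pm=\pm(1,0)$, so only the first component is shifted (by $\mp\mu$), and the classical relation $\lambda+|p(t)|>0$ translates into the stated expression.

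The heart of the argument is part (d). Its monotonicity is structural: the Gamkrelidze measure attached to each inequality is non-negative and can grow only while its own constraint is active, so $\mu$ is constant on intervals with $|x_1^*|<1$, increases on $\{t:x_1^*(t)=-1\}$ (through $g_-$), and decreases on $\{t:x_1^*(t)=1\}$ (through $g_+$), as stated. The continuity of $\mu(\cdot)$ is the main obstacle, and the very reason the regularity condition is imposed: I would invoke (or reproduce) the continuity result of \cite{Arutyunov_Karamzin_2015,Arutyunov_Karamzin_Pereira_2018}, which shows that under regularity the associated measure is non-atomic, so its distribution function $\mu$ is continuous. The mechanism is that at entry, exit, and isolated contact points regularity forces $\Gamma=0$ and the active $\varphi_i=0$ to cut out a manifold transversally, which precludes a jump of $\mu$; making this rigorous—controlling $\mu$ at junctions and ruling out atoms—is the delicate step.

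Finally, for the free-time transversality (c) I would use that $T$ is itself the cost. Introducing the maximized Hamiltonian $h(t)=\max_{\varphi(u)\le 0}\bar H^*(t,u)$, a standard envelope argument combined with (a) and (b) cancels the state- and control-dependent contributions; the term involving $\dot\mu$ vanishes because $\Gamma=\dot x_1^*=0$ exactly on the boundary set where $\mu$ varies, leaving $\dot h=\partial\bar H^*/\partial t$ almost everywhere. Stationarity of the cost with respect to the free terminal time then yields $h(T^*)=0$. The non-degeneracy of the full multiplier set—so that (e) holds for every $t$ rather than merely in an aggregate sense, and $\lambda$ together with $(\psi,\mu)$ does not trivialize with mass concentrated at an endpoint—I would secure from the regularity condition together with the endpoint constraints being a plain two-point condition, following the non-degeneracy theory of \cite{Vinter_Fereira_1994,Arutyunov_Karamzin_Pereira_2011,Bettiol_Khalil_2016}.
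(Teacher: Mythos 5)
Your outline assembles the right ingredients, but it follows a genuinely different route from the paper and omits the step the paper regards as the crux. The paper does not apply a general Gamkrelidze-form theorem to problem (\ref{problem}) directly: it first performs a time reparametrization, introducing an auxiliary state $x_0$ (playing the role of time) and an auxiliary control $u_0\in[1-\alpha,1+\alpha]$, so as to obtain the fixed-horizon, autonomous extended problem (\ref{problem:extended}); the fixed-time results of \cite{Arutyunov_2000}, \cite{Karamzin_Pereira_2019} and \cite[Theorem 4.1]{Arutyunov_Karamzin_Pereira_2011} are applied to that problem, and conditions (a)--(e) are recovered by translating the resulting conditions (i)--(v) back. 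In particular, the time-transversality (c) is obtained from the conservation law of the autonomous extended problem together with the identification $\psi_0=-\bar H(u^*)$, not from a separate envelope argument. This matters because the theorems you propose to invoke --- including the continuity and nondegeneracy results of \cite{Arutyunov_Karamzin_2015,Arutyunov_Karamzin_Pereira_2018} that you correctly identify as carrying the weight of part (d) and of the pointwise condition (e) --- are formulated for fixed-time problems, whereas here $v(t,x)$ is explicitly time-dependent and $T$ is free. Your plan gives no mechanism for transferring those conclusions to this setting; that transfer is exactly what the reparametrization accomplishes, and it additionally requires checking that the extended problem inherits regularity in the sense of Definition \ref{def: regularity}, which is what yields the pointwise non-triviality (v) and hence (e).

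Beyond that structural point, you explicitly defer the step you yourself call delicate --- continuity of $\mu$ at junction points, i.e., non-atomicity of the measure --- to the literature without carrying it out; the paper does the same (it is condition (iv) of the cited theorem applied to the extended problem), so this is not a defect relative to the paper, but your write-up contains no new argument there. Your reading of the monotonicity in (d) as the superposition of two one-sided nonnegative measures attached to $x_1-1\le 0$ and $-x_1-1\le 0$, and of (e) as the Gamkrelidze shift $p=\psi-\mu\nabla g$ applied to a classical nondegeneracy condition, are both consistent with the paper. To make your direct route work you must either locate a free-time, nonautonomous version of the regularity/continuity theorem or perform the reduction to (\ref{problem:extended}) yourself; as written, the proposal has a gap precisely where the paper's proof does its work.
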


\begin{remark}\label{remark: extended problem}

\noindent
\begin{itemize}
\item[a)] The proof of Theorem \ref{theorem: maximum principle} can be found in the Appendix. It relies on a time-reparametrization technique converting problem (\ref{problem}) into a fixed and time-independent one, and on results of \cite{Arutyunov_2000}, \cite{Karamzin_Pereira_2019}, and \cite[Theorem 4.1]{Arutyunov_Karamzin_Pereira_2011}.
\item[b)] From the regularity property of the problem, the expression of the measure multiplier can be found in terms of the state and the adjoint variables. The junction points -- the points at which the trajectory meets the state constraint boundary -- can be computed as a result of the continuity of the measure multiplier $\mu$ (condition (d)). From these considerations, explicit formulae for the measure multiplier can be obtained. This is the core of our computational scheme proposed in this article for finding the set of extremals.
\item[c)] The non-triviality condition (e), which asserts the non-degeneracy of the Maximum Principle, implies that
    \begin{equation} \label{eq: result from the non-triviality} |\psi_1(t)-\mu(t)| + |\psi_2(t)|  > 0 \quad \text{for all } t \in [0,T^*]. \end{equation}
	A proof is provided in the Appendix.
\end{itemize}

\end{remark}

\section{Applications: Control Set Constrained to the Square} \label{sec: applications}

In this section, we consider the specific case of a control set represented as the unit square in $\R^2$, i.e.

    \begin{equation} \label{def: control set square}  U: = \{ u \in \R^2 \ : \ \varphi_1(u):= |u_1| \le 1 \text{ and }  \varphi_2(u) := |u_2| \le 1 \}.
    \end{equation}

We study how a simple assumption on the vector flow field can automatically lead to regularity in the sense of Definition \ref{def: regularity}. Thereafter, we use the necessary optimality condition derived in Section \ref{sec: maximum pronciple} to obtain explicit expressions of the optimal control and the measure Lagrange multiplier in terms of the state and adjoint variables. These expressions will be substituted in the associated
boundary-value problem to numerically find the set of extremals (see Section \ref{sec: numercial results}).

\subsection{Sufficient Condition for Regularity}

In the problem considered here, the following simple assumption suffices to ensure regularity as defined above.

\begin{itemize}
\item[(H)] $|v_1(t,x)| < 1$ for all $(t, x)\in \R\times\R^2$.
\end{itemize}

Indeed, if the starting and/or terminal positions are in the interior of the state constraint domain, and if
the flow is much faster at the boundary of the state constraint set, $|x_1|=1$, assumption (H) is crucial to
guarantee that the moving object is able to overcome the flow field effect, and, thus, leave the boundary of
the state constraint, and move across the river along the axis $0x_1$.

\begin{proposition}
Assume that (H) is satisfied. Then, the problem (\ref{problem}), with the specific choice of the control set $U$, as defined in (\ref{def: control set square}), is regular in the sense of Definition \ref{def: regularity}.
\end{proposition}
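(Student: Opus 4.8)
I want to verify the regularity condition from Definition \ref{def: regularity} under assumption (H). The condition requires that at any point $(t,x,u)$ with $|x_1|=1$, $\varphi(u)=0$, and $\Gamma(t,x,u)=0$, the gradients $\pdv{\Gamma}{u}$ together with $\nabla\varphi_i$ for $i\in I_\varphi(u)$ are linearly independent. So first I would compute these gradient vectors explicitly for the square control set. Since $\Gamma(t,x,u)=u_1+v_1(t,x)$, we get $\pdv{\Gamma}{u}=(1,0)$. For the constraints $\varphi_1(u)=|u_1|-1$ and $\varphi_2(u)=|u_2|-1$, the active-constraint gradients are $\pm(1,0)$ and $\pm(0,1)$ respectively (signs depending on which face is active).

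Let me think about the structure of the argument. The key observation is that $\varphi(u)=0$ means both $|u_1|=1$ and $|u_2|=1$, so we are at a corner of the square and $I_\varphi(u)=\{1,2\}$ always. Wait — I need to re-read: $\varphi=(\varphi_1,\varphi_2)$ and the condition says $\varphi(u)=0$, i.e. both components vanish. But actually the point of checking regularity is to rule out bad configurations. Note that when $|u_1|=1$ we have $\pdv{\Gamma}{u}=(1,0)$ pointing in the same coordinate direction as $\nabla\varphi_1=\pm(1,0)$. These two vectors are therefore parallel, so the set $\{\pdv{\Gamma}{u},\nabla\varphi_1,\nabla\varphi_2\}$ would be linearly dependent! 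This is where assumption (H) must intervene: I would use (H) to show that the problematic configuration $\varphi(u)=0$ together with $\Gamma(t,x,u)=0$ simply \emph{cannot occur}. Indeed $\Gamma=0$ forces $u_1=-v_1(t,x)$, but $\varphi_1(u)=0$ forces $|u_1|=1$, whence $|v_1(t,x)|=1$, contradicting (H). So the vacuous satisfaction of the hypothesis is the real mechanism.

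The cleaner route, which I would follow, is to observe that the index set $I_\varphi(u)$ need not force \emph{both} constraints active: the regularity definition quantifies over $u$ with $\varphi(u)=0$, and I read $\varphi(u)=0$ as saying the full vector vanishes, but one should track exactly which $\varphi_i$ vanish via $I_\varphi(u)$. In either reading, the decisive step is the same: whenever the first constraint is active ($1\in I_\varphi(u)$, so $|u_1|=1$), the compatibility requirement $\Gamma=0$ would demand $u_1=-v_1(t,x)$ with $|v_1(t,x)|=1$, which (H) forbids. Hence at every point satisfying all three hypotheses simultaneously, we must have $1\notin I_\varphi(u)$, so $\nabla\varphi_1$ is absent and the remaining vectors $\pdv{\Gamma}{u}=(1,0)$ and $\nabla\varphi_2=\pm(0,1)$ are manifestly linearly independent.

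\textbf{The main obstacle.} The only delicate point is getting the bookkeeping of $I_\varphi(u)$ and the exact reading of ``$\varphi(u)=0$'' right, and then packaging the argument so that (H) does its job by \emph{excluding} the rank-deficient corner cases rather than by a genuine rank computation. I expect the linear-algebra verification itself to be routine once the relevant vectors are written down; the care lies entirely in showing that (H) rules out the coincidence $|u_1|=1$ and $u_1=-v_1(t,x)$, which is precisely the configuration in which $\pdv{\Gamma}{u}$ and $\nabla\varphi_1$ collapse onto the same line. I would state this contradiction as the central lemma of the proof and conclude linear independence in the surviving cases by inspection.
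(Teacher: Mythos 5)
Your proposal is correct and follows essentially the same route as the paper: use $\Gamma=0$ together with (H) to conclude $|u_1|=|v_1(t,x)|<1$, so $\varphi_1$ cannot be active, and then observe that the surviving vectors $\pdv{\Gamma}{u}=(1,0)$ and $\nabla\varphi_2=(0,\pm1)$ are trivially linearly independent. Your additional care about the reading of ``$\varphi(u)=0$'' versus the index set $I_\varphi(u)$ is a reasonable clarification of a genuine ambiguity in the definition, but it does not change the mechanism, which matches the paper's proof exactly.
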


\begin{proof}
For $\Gamma(t,x,u)=0$, $|u_1| < 1$ ${\cal L}$-a.e., and the strict inequality $\varphi_1(u)< 0$ holds at the boundary of the state constraint set. Then, the regularity condition is satisfied if the vectors $\nabla\varphi_2$, and $\pdv{\Gamma}{u}$ constitute a linearly independent set. For our particular problem, $\nabla\varphi_2= {\it col}(0,1)$, and $\pdv{\Gamma}{u} = {\it col}(1,0)$. Therefore, the problem (\ref{problem}) with the particular choice in (\ref{def: control set square}) is regular.
\end{proof}

\begin{remark}
Under assumption (H), the necessary conditions of optimality expressed by Theorem \ref{theorem: maximum principle}, guaranteeing the non-degeneracy of the Lagrange multipliers, and the continuity of the Borel measure $\mu$, can be applied.
\end{remark}

\subsection{Explicit formulas for $u^*$ and $\mu$}

From the maximum condition (b) of Theorem \ref{theorem: maximum principle}, for a.a. $t\in [0,T^*]$,
    \begin{align*} & \max_{|u_1| \le 1,|u_2| \le 1} \left\{\left(\psi_1(t) - \mu(t)\right)u_1 + \psi_2(t) u_2\right\}  = \left(\psi_1(t) - \mu(t)\right)u_1^*(t)+ \psi_2(t) u_2^*(t).
    \end{align*}
This implies that the value of the optimal control process $(u_1^*,u_2^*)$ varies w.r.t. the sign of $\psi_1-\mu$ and $\psi_2$, as follows:
\begin{equation}
\label{uop2}
\begin{cases}
\text{if}\ \psi_1 - \mu \neq 0, & \text{then}\ u_1^* = sgn(\psi_1 - \mu)\\
\text{if}\ \psi_2 \neq 0, & \text{then}\ u_2^* = sgn(\psi_2).\\
\end{cases}
\end{equation}
The expressions of $u^*$ and $\mu$ differ for points belonging to the boundary of the state constraint set or for points in its interior. Next, we discuss these two cases.

When the trajectory stays on the boundary of the state constraint set during a certain set $\Delta$, then $u_1^*(t)=-v_1(t,x^*(t))$ ${\cal L}$-a.e. on $\Delta$, and, by continuity, everywhere on $\Delta$. Thus, under assumption (H), $|u^*_1(t)|<1$. Therefore, from the maximum condition, we have
\begin{equation}
\label{mu2}
\mu(t)=\psi_1(t) \qquad \text{for all } t \text{ such that } |x^*(t)|=1.
\end{equation}

Moreover, as a result of (\ref{eq: result from the non-triviality}), we have $\psi_2(t) \neq 0$ for all $t$ such that $|x_1^*(t)|=1$ and, thus, $u_2= \pm 1$.

Now, let $\Delta$ be a time interval during which the trajectory lies in the interior of the state constraint set, i.e. $|x_1^*(t)|<1$ $\forall\, t\in\Delta$. For any point $t\in \Delta$, the Lagrange measure multiplier $\mu$ is constant. Thus, it follows that, for some $t$ in a nonzero Lebesgue measure subset of $ \Delta$, we have $\psi_1(t) - \mu(t) = 0$, and since $\mu(t) $ is constant on $\Delta$, and hence, $\dot \psi_1 (t)=0 $, on this subset. From the adjoint equations, we conclude that $\displaystyle \psi_2(t)\frac{\partial v _2(x)}{\partial x_1} = 0$. From the non-triviality condition, we have to have $\psi_2(t) \neq 0 $ ${\cal L}$-a.e. on $\Delta$, and, thus, $\displaystyle \frac{\partial v_2(x)}{\partial x_1} = 0$ ${\cal L}$-a.e. on $\Delta$. In this case the maximum condition is not informative for the first component of the control. This is one case of the so-called singular control, i.e., the controls cannot be defined on a non-zero measure set. Another singular situation corresponds to the case when $\psi_2(t)=0$ for some $t$ in a nonzero Lebesgue measure subset of $\Delta$. Next, we present and prove sufficient conditions that preclude the emergence of singular control by imposing additional conditions on the flow vector-field $v$.

Let $ S_0=\{t\in[0,T]: |x_1(t)|=1\}$, and $S_{-}=\{t\in[0,T]: |x_1(t)|<1\}$, $\Delta\subset [0,T]$ such that ${\cal L}\mbox{-meas}(\Delta) >0 $, and
\begin{eqnarray*}
S_1&=&\{t\in\Delta\subset S_{-}: \psi_1(t)-\mu(t) =0 \mbox{ and }\psi_2\neq 0 \}\\
S_2&=&\{t\in\Delta\subset S_{-}: \psi_1(t)-\mu(t) \neq 0 \mbox{ and }\psi_2 = 0 \}\\
S_3&=&\{t\in\Delta\subset S_0: \psi_1(t)-\mu(t) =0 \mbox{ or }\psi_2 = 0 \}\\
\end{eqnarray*}
In what follows, we suppress the $t$-dependence of $v$ as it does not play any role in the developments.

\begin{proposition}\label{proposition: regularity sufficient condition-2}

\noindent
\begin{itemize}
\item[a)] If $\displaystyle \frac{\partial v_2(x(t))}{\partial  x_1} \neq 0$ on $S_1$, then ${\cal L}\mbox{-meas}(S_1)=0 $.
\item[b)] If $\displaystyle \frac{\partial v_1(x(t))}{\partial  x_2} \neq 0$ on $S_2$, then ${\cal L}\mbox{-meas}(S_2)=0 $.
\item[c)] On $S_3$ we always have  ${\cal L}\mbox{-meas}(S_3)=0 $.
\end{itemize}
\end{proposition}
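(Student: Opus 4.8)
The plan is to handle the three parts through one common mechanism together with the componentwise adjoint system, treating the boundary case (c) separately via relations already established on $S_0$. Writing the Hamiltonian as $\bar H = (\psi_1-\mu)(u_1+v_1) + \psi_2(u_2+v_2) - \lambda$, condition (a) of Theorem \ref{theorem: maximum principle} gives
\begin{align*}
\dot\psi_1 &= -(\psi_1-\mu)\frac{\partial v_1}{\partial x_1} - \psi_2\frac{\partial v_2}{\partial x_1}, \\
\dot\psi_2 &= -(\psi_1-\mu)\frac{\partial v_1}{\partial x_2} - \psi_2\frac{\partial v_2}{\partial x_2}.
\end{align*}
The elementary fact I would rely on for (a) and (b) is that an absolutely continuous function which is constant on a measurable set has vanishing derivative almost everywhere on that set; since $\psi\in W_{1,\infty}$ is Lipschitz and, by condition (d), $\mu$ is continuous and constant on interior intervals, this applies to both $\psi_1-\mu$ and $\psi_2$.

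For part (a) I would argue by contradiction, assuming ${\cal L}\mbox{-meas}(S_1)>0$. On $S_1\subset S_-$ the multiplier $\mu$ is constant and $\psi_1-\mu=0$, so $\psi_1$ is constant on $S_1$ and hence $\dot\psi_1=0$ almost everywhere there. Substituting $\psi_1-\mu=0$ into the first adjoint equation collapses it to $0=-\psi_2\,\partial v_2/\partial x_1$ a.e.\ on $S_1$; since $\psi_2\neq 0$ on $S_1$ by definition, this forces $\partial v_2/\partial x_1=0$ a.e.\ on $S_1$, contradicting the hypothesis. Part (b) is symmetric: on $S_2$ one has $\psi_2\equiv 0$, hence $\dot\psi_2=0$ a.e., the second adjoint equation reduces to $0=-(\psi_1-\mu)\,\partial v_1/\partial x_2$, and $\psi_1-\mu\neq 0$ on $S_2$ then yields $\partial v_1/\partial x_2=0$, again a contradiction. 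In both cases the constancy argument together with the single surviving adjoint term does all the work.

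Part (c) I expect to be the delicate point, because it rests on boundary relations rather than on the density argument. On $S_0$ the trajectory satisfies $\dot x_1=0$, so $u_1^*=-v_1$, which under assumption (H) is an interior value of the control constraint; the maximum condition in the $u_1$ direction then forces the coefficient $\psi_1-\mu$ to vanish, giving $\psi_1-\mu\equiv 0$ on $S_0$, i.e.\ (\ref{mu2}). Feeding this into the non-triviality consequence (\ref{eq: result from the non-triviality}) forces $\psi_2\neq 0$ everywhere on $S_0$, so the part of $S_3$ where $\psi_2=0$ is empty. The remaining part, where $\psi_1-\mu=0$, is not a genuine singular arc, because $u_1^*$ is already pinned by the active state constraint rather than left free by the maximum condition. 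The main obstacle is precisely making this last point rigorous: I would make explicit that the only way a control component can be undetermined on $S_0$ is through $\psi_2=0$, which has just been excluded, so the singular portion of the boundary arc is ${\cal L}$-null.
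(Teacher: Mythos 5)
Your proof is correct, and for parts a) and b) it is essentially the paper's argument: in both cases one substitutes the defining relation of $S_i$ into the corresponding component of the adjoint system so that only one term survives, and then uses the fact that an absolutely continuous function constant on a set of positive measure has vanishing derivative a.e.\ there (the paper states this in contrapositive form -- $\dot\psi_i\neq 0$ on $S_i$ forces $\mathcal{L}\mbox{-meas}(S_i)=0$ -- while you run the contradiction explicitly; your version is the more carefully justified of the two). For part c) you take a genuinely shorter route than the paper: the paper propagates a zero of $\psi_2$ along the boundary arc via the decoupled linear ODE $-\dot\psi_2=\psi_2\,\partial v_2/\partial x_2$, argues that $\psi_1$ is then constant, and only at the end invokes non-triviality; you instead combine $(\ref{mu2})$ (i.e.\ $\psi_1-\mu\equiv 0$ on the boundary, which you re-derive from interiority of $u_1^*=-v_1$ under (H)) directly with the non-degeneracy inequality $(\ref{eq: result from the non-triviality})$ to get $\psi_2\neq 0$ pointwise on $S_0$, which is exactly the observation the paper itself makes in the main text just after $(\ref{mu2})$ but does not reuse in the proof. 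You also flag, correctly, that with the disjunction in its definition $S_3$ literally equals all of $\Delta$ (since $\psi_1-\mu\equiv 0$ on $S_0$), so the statement $\mathcal{L}\mbox{-meas}(S_3)=0$ only makes sense once $S_3$ is read as the set where the maximum condition genuinely fails to determine the control, i.e.\ where $\psi_2=0$; the paper's proof implicitly makes the same reinterpretation without acknowledging it, so your explicit treatment of this point is a small improvement rather than a gap.
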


\begin{proof}

\noindent
Proof of item $a)$. Clearly, for all $t\in S_1$, $\displaystyle - \dot \psi_1(t) = \psi_2(t)\frac{\partial v_2(x(t))}{\partial x_1}$, and, thus $-\dot \psi_1(t) \neq 0$ for all $t\in S_1$. Since $S_1\subset S_{-}$,
we readily conclude that ${\cal L}$-meas$(S_1)=0$.

\noindent Proof of item $b)$.  Now, for $t\in S_2$, we have $\displaystyle - \dot \psi_2(t) = (\psi_1(t)-\mu(t)) \frac{\partial v_1(x(t))}{\partial x_2}$. Thus, if $\frac{\partial v_1(x(t))}{\partial x_2}\neq 0$, then $\dot \psi_2 (t) \neq 0$, and, thus, ${\cal L}$-meas$(S_2)=0$.

\noindent Proof of item $c)$. Consider some $t\in S_3$. The system of adjoint equations can be written as follows
\begin{equation}\label{adjoint2}
-\dot \psi(t) = D_x^T v(x(t))\psi(t)-\mu(t) \nabla_x v_1(x(t))
\end{equation}
Since $\psi_1(t) -\mu(t) = 0 $ on $ S_0$, we have
$$ -\dot \psi_1(t) = \frac{\partial v_2(x(t))}{\partial x_1} \psi_2(t) \; \mbox{  and } -\dot \psi_2(t) = \frac{\partial v_2(x(t))}{\partial x_2} \psi_2(t).$$

If there exists some $\bar t \in \Delta$ such that $\psi_2 (\bar t) =0$, then  $\psi_2 (\bar t) =0$ on a nonzero Lebesgue measure subset of $\Delta$. From the above, also follows that $\psi_1(t) $ is constant on this subset what contradicts $\psi_1(t)-\mu(t) =0 $ on $S_0$. From this, and the nontriviality condition of the multipliers, we have the desired conclusion.


\end{proof}



	
	
\begin{remark}\label{remark: extra regularity condition sufficient condition}

Proposition \ref{proposition: regularity sufficient condition-2} represents sufficient conditions to avoid singular controls. These conditions allow
the maximum condition to stay informative and to define the optimal controls on a non-zero measure set.
Since the conditions are only sufficient, this means that there might be some cases in which the singular controls do not occur even if these conditions are not satisfied.

\end{remark}

	
%

\section{Numerical Results}\label{sec: numercial results}

In this section, we present and discuss some numerical results for the above stated optimal control problem by 
using an indirect method based on the considered Maximum Principle. The conditions of this Maximum Principle 
lead to the following two-point Boundary Value Problem (BVP):
\begin{subequations}
\label{goveq}
\begin{align}
\dot{x}&=u+v,\label{ge1}\\
\dot{\psi}&=-\psi \frac{\partial v}{\partial x} + \mu \nabla v_1,\label{ge2}\\
 &x(0)=A,\label{xini}\\
 &x(T)=B\label{xterm}.
\end{align}
\end{subequations}
The control variables $u_1(t)$ and $u_2(t)$ are given by (\ref{uop2}) and the measure Lagrange multiplier 
$\mu(t)$ is constant for trajectories not meeting the boundary and is defined by (\ref{mu2}) along the boundary 
of the state constraint.
Boundary conditions at $0$ and $T$ for the adjoint arc $\psi(t)$ are absent.

The BVP problem (\ref{goveq}) is solved by a variant of the shooting method (see, e.g., \cite{nr} for a brief overview of the shooting methods). The shooting parameter is the angle $\theta$ parameterizing the initial boundary condition for $\psi$:
\begin{equation}
\label{psiini}
\psi(0)=(\cos(\theta),\sin(\theta)).
\end{equation}
Starting from the initial conditions (\ref{xini}) and (\ref{psiini}) for a given value of $\theta$, the Cauchy 
problem for the system of ordinary differential equations (\ref{ge1})--(\ref{ge2}) is solved by the classical
$4^{th}$ order Runge-Kutta method with the constant time step $\tau=10^{-4}$.
The set of solutions to the BVP (\ref{goveq}) constitute the field of extremals. By integrating the system (\ref{ge1})--(\ref{ge2}) forward in time, the measure Lagrange multiplier $\mu(t)$ is 
set to zero for the trajectory in the interior of the domain (i.e. when $|x_1|<1$).
If it reaches the boundary, $|x_1|=1$, at, say, $t=t^*$, then $\mu^*=\mu(t^*)$ is computed by (\ref{mu2}).
By using the continuity of the measure Lagrange multiplier \cite{Arutyunov_Karamzin_2015}, we deem, that if $|\mu^*|<10^{-3}$, the point is a junction point of an extremal and integration of the system is continued 
following the domain boundary. At each time step along the boundary, the trajectories leaving it with constant  values of $\mu$ are computed.
This is done to find another junction point or a segment joining the boundary with the terminal point $B$.
If at a certain time the terminal boundary condition (\ref{xterm}) is satisfied to the accuracy $10^{-3}$,
the corresponding trajectory represents an extremal.
To find all extremals, the parameter $\theta$ is varied from $0$ to $2\pi$ in (\ref{psiini}) with
a constant step of $10^{-2}$, being the bisection method used if the required accuracy is not achieved.
Once all extremals, i.e., field of extremals) are computed, the one possessing minimal travelling time among 
all the extremals is the optimal solution to the original control problem (\ref{problem}).



The first example considers the steady fluid flow
\begin{equation}
\label{eq:steady}
v(x)=(\frac{1}{4}x_1,\, -x_1^2),
\end{equation}
which is represented by the black arrows in Fig.~\ref{fig1}. We clearly notice that, for this specific field, $\displaystyle \pdv{v_1}{x_2}=0$ everywhere, while $\displaystyle \pdv{v_2}{x_1}=0$ at points such that $x_1=0$. However the field of extremal can still be numerically found, supporting Remark \ref{remark: extra regularity condition sufficient condition}. The initial and final positions are $A=(0,0)$ and $B=(-0.5,-6)$, respectively. The field of extremals is also shown in Fig.~\ref{fig1}. The optimal extremal is the red one with travelling time $3.43$ s time units.


In the next example, a perturbation of the flow periodic in time (\ref{eq:steady}) is considered for the same 
trajectory endpoints $A$ and $B$. Here,
$$v(x,t)=\left(\frac{1}{4 x_1}+\sin(\frac{\pi t}{2}),\, -x_1^2\right),$$
reflects the fact that tidal variations affect only the component transversal to the main flow. Even Proposition \ref{proposition: regularity sufficient condition-2} is violated at some points, the field of extremals is computed and it is shown in Fig.~\ref{fig2}, displaying four extremals as in the steady case considered above. In contrast to the problem for the steady flow (\ref{eq:steady}),
only one extremal (shown in blue) does not meet the boundary; two extremals (black) meet the right boundary and 
only one (red) has the left boundary segment. The optimal extremal is again the one (shown in red, with 
travelling time $3.63$ s) involving the left boundary segment, however, the travelling time along the extremal
(black) involving the right boundary, $3.77$ s, is not significantly larger.

\begin{figure}[h!]
	\begin{minipage}{0.45\textwidth}
		\centering
		\includegraphics[width=\textwidth]{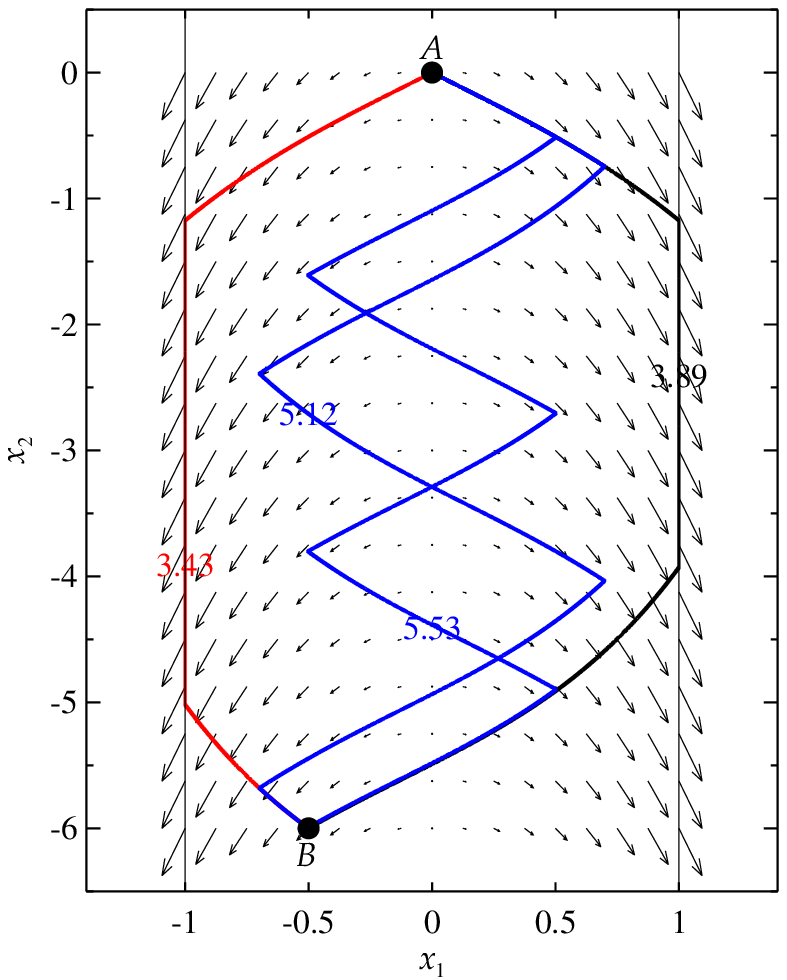}
		\caption{\small Field of extremals for the steady fluid flow \hbox{$v(x)=(\frac{1}{4}x_1,\, -x_1^2)$}.
			Extremals not meeting the boundary are shown in blue, meeting the right and left boundaries
			are displayed in black and red, respectively.
			Inscribed numbers stand for travelling times along the
			corresponding extremals.}
		\label{fig1}
	\end{minipage}\hfill
	\begin{minipage}{0.45\textwidth}
		\centering
		\includegraphics[width=\textwidth]{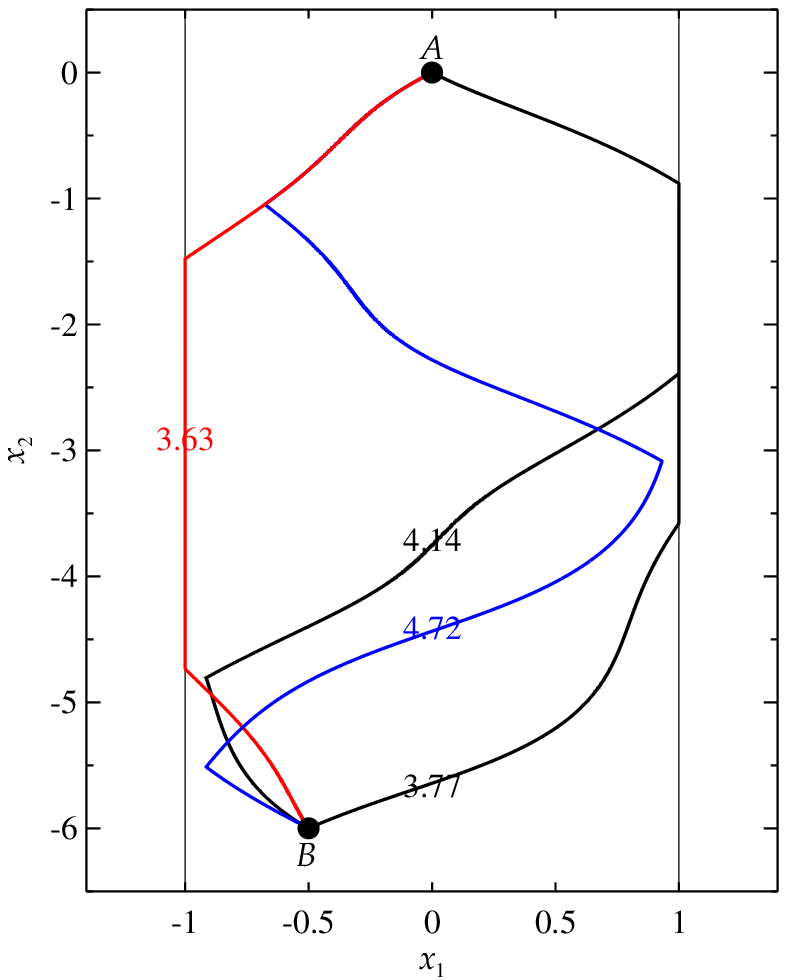}
		\caption{\small Field of extremals for the time-periodic fluid flow \hbox{$v(x,t)=(\frac{1}{4}x_1+\sin(\pi t/2),\, -x_1^2)$}. Extremals not meeting the boundary are shown in blue, and meeting the right and left boundaries
			are displayed in black and red, respectively.
			Inscribed numbers stand for travelling times along the corresponding extremals.}
		\label{fig2}
	\end{minipage}
\end{figure}

The last example concerns the flow vector field
\begin{equation*}
\label{eq:steady-time}
v(x)=\bigg(\frac{x_1}{4} + \frac{x_2}{10},\, -x_1^2- \frac{1}{2}\sin^2\bigg(\frac{\pi t}{2}\bigg)\bigg),
\end{equation*}

For this field, $\displaystyle \pdv{v_1}{x_2} \neq 0$, while $\displaystyle \pdv{v_2}{x_1} = 0$ when $x_1=0$. The corresponding field of extremals is computed and shown in Fig. \ref{fig3}. This field contains five inner trajectories not touching the state constraint boundary, one right trajectory, and an optimal one that reaches the final point $B$ after $3.19$ time units.

\begin{figure}
	\centerline{
		\includegraphics[width=0.45\textwidth]{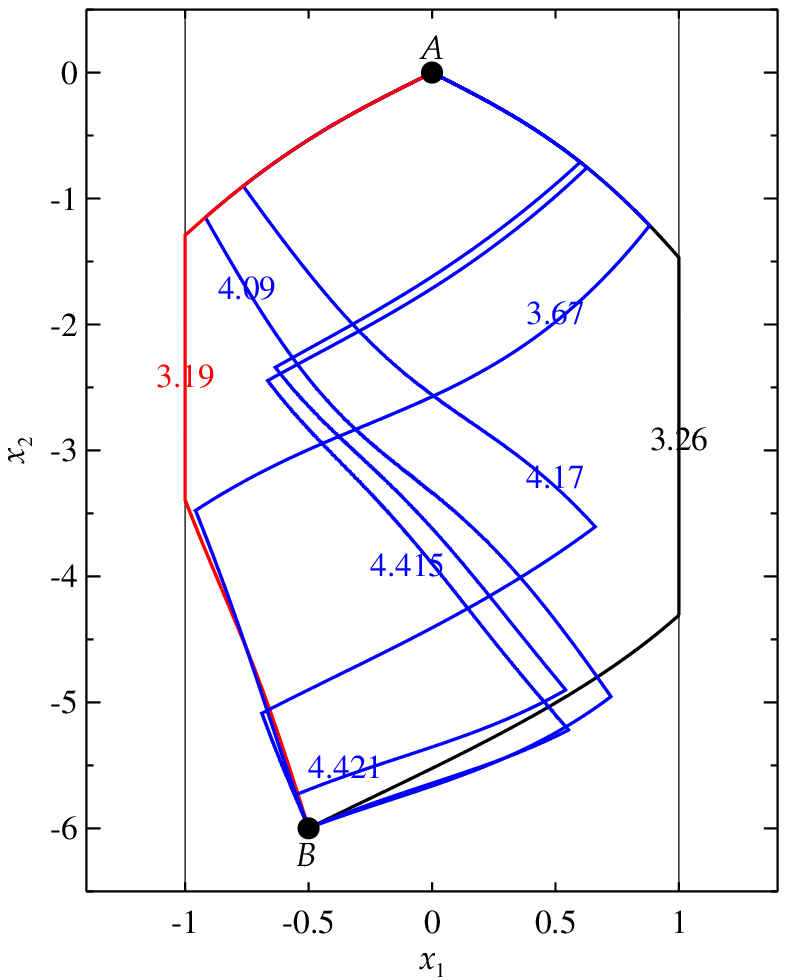}
	}
	\caption{
		Field of extremals for the fluid flow \hbox{$v(x)=\left(\frac{1}{4}x_1 + \frac{1}{10} x_2,\, -x_1^2- \frac{1}{2}\sin^2\left(\frac{1}{2}\pi \right)\right)$}.
		Extremals not meeting the boundary are shown in blue, meeting the right and left boundaries
		are displayed in black and red, respectively. Inscribed numbers stand for travelling times along the 
		corresponding extremals.
	} \label{fig3}
\end{figure}

\section{Conclusion}\label{sec: conclusion} In this article, we presented an approach based on the maximum principle amenable to the numerical 
computation of solutions to a regular class of state-constrained optimal control navigation problems 
subject to a flow field effects. In order to overcome the computational difficulties due to the Borel measure
associated with the state constraints, a not so common version of the maximum principle - the so-called Gamkrelidze form - was adopted and a regularity condition on the data of the problem was imposed to ensure 
the continuity of the Borel measure multiplier. 
We showed how this property plays a significant role for trajectories meeting the state constraint boundary. 
We also proved that this regularity assumption is not restrictive, and it is naturally satisfied by a wide 
class of optimal control problems. 
The theoretical analysis was supported by several illustrative examples (for steady and time-periodic
flows mimicking real river currents) for which the corresponding fields of extremals were constructed, and optimal solutions were found.

\section*{Appendix}
In the first part of this appendix, the proof of Theorem \ref{theorem: maximum principle} is provided. The second part concerns the proof of condition (\ref{eq: result from the non-triviality}).

{\bf Proof of Theorem \ref{theorem: maximum principle}.} Following Remark \ref{remark: extended problem} a), a time-reparametrization is needed. Indeed, for a minimizer $(x^*,u^*,T^*)$ of the original problem (\ref{problem}), we consider the extended fixed and independent-time optimal control problem associated to (\ref{problem}).


\begin{equation}
\begin{aligned}\label{problem:extended}
& {\text{Minimize}}
& & x_0(T^*) \\
& \text{subject to}
& &  \dot x_0 = u_0, \quad  \dot x = (u + v(x_0,x))u_0, \quad \text{ a.e. } t \in [0,T^*] \\
&&& x(0)=A,\;\;x(T^*)=B, \; \; x_0(0)=0,\;\;x_0(T^*) \in \R\\
&&&  |x_1|\le 1,  \quad  \text{ for all }\ t \in [0,T^*]\\
&&& u_0 \in [1-\alpha, 1+\alpha]   \quad  \text{ a.e. } t \in [0,T^*] \\
&&& u \in U: = \{ u \ : \ \varphi(u) \leq 0   \},
\end{aligned}
\end{equation}
where $x_0$ is a new state variable which parameterizes the time, and $u_0$ the associated control defined in $[1-\alpha,1+\alpha]$ where $\alpha\in (0,1)$. We can prove that if $(x^*,u^*,T^*)$ is a minimizer for (\ref{problem}) then $({x}_0^*(t)=t, u_0^*= 1, x^*, u^*)$ is a minimizer for (\ref{problem:extended}).

The corresponding time-independent Hamiltonian is denoted $\bar H_0$ and defined as follow:
\[ \bar H_0 := \bar H u_0 + \psi_0u_0  \qquad \text{where } \psi_0 \in \R. \]
Here $\bar H$ is the extended Hamiltonian for the original problem (\ref{problem}).
The application of the maximum principle to (\ref{problem:extended}) yields the existence of a number 
$\lambda\in [0,1]$, $(\psi_0,\psi) \in W_{1,\infty}([0,T^*];\R) \times W_{1,\infty}([0,T^*];\R^2)$, and a 
scalar function $\mu(.)$, such that
\begin{itemize}
\item[(i)] $
	(\dot \psi_0 (t), \dot \psi(t))  =  -\pdv{\bar H_0}{x_0} \times \pdv{\bar H_0}{x}
	({x}_0^*, u_0^*, x^*,u^*,\psi_0,\psi,\mu,\lambda) \qquad \text{ for a.e. } t \in [0,T^*];
	$
\item[(ii)] $(u_0^*(t),u^*(t))  \in \mathop{\rm argmax}\limits_{w\in [1-\alpha, 1+\alpha], \ \varphi(u) \le 0} \{{\bar H_0}(x_0^*(t), u_0,x^*(t),u,\psi_0(t), \psi(t),\mu(t),\lambda) \}$
	for a.e. $t\in [0,T^*]$;
\item[(iii)] (Conservation law)
	$ \max\limits_{u_0\in [1-\alpha, 1+\alpha],\ \varphi(u) \le 0} \{\bar H_0(u_0,u) \} =0  \qquad \forall t\in [0,T^*]; $
	\item[(iv)] $\mu(t)$ is constant on the time intervals where
	$ |x^*(t)|< 1,$
	increasing on  $\{t\in [0,T]:x_1^*(t)=-1\}$, and decreasing on $\{t\in [0,T]: x_1^*(t)=1\}$.
	Moreover, $\mu(\cdot)$ is continuous on $[0,T^*]$;
\item[(v)]
	$\lambda + |\psi_0(t)| + |\psi_1(t)-\mu(t)| + |\psi_2(t)| > 0, \qquad \forall t\in [0,T^*].$
\end{itemize}

\begin{remark} These necessary optimality conditions are results of \cite{Arutyunov_2000}, \cite{Karamzin_Pereira_2019}, and \cite[Theorem 4.1]{Arutyunov_Karamzin_Pereira_2011}. Moreover, the non-triviality condition (v) is implied by the regularity of the extended problem (\ref{problem:extended}), in the sense of Definition \ref{def: regularity}. More details can be found in \cite{Khalil et al. IEEE TAC 2019}. \end{remark}

We explicit now these necessary conditions (i)-(v).

Condition (i) is equivalent to the following:

\begin{align*}
\dot \psi(t) &  =\left(-\psi(t) \frac{\partial v}{\partial x} (x_0^*(t),x^*(t)) + \mu(t) \frac{\partial \Gamma}{\partial x}(x_0^*(t),x^*(t),u^*(t))\right).u_0^*(t) \\ 
& =-\psi(t) \frac{\partial v}{\partial x} (t,x^*(t)) + \mu(t) \frac{\partial \Gamma}{\partial x}(t,x^*(t),u^*(t))
\end{align*}
which proves condition (a) of Theorem \ref{theorem: maximum principle}, and
\begin{align*}
\dot \psi_0(t) &  =\bigg(-\psi(t) \frac{\partial v}{\partial x_0} (x_0^*(t),x^*(t)) + \mu(t) \frac{\partial \Gamma}{\partial x_0}(x_0^*(t),x^*(t),u^*(t))\bigg).u_0^*(t) \\ 
& =-\psi(t) \frac{\partial v}{\partial x_0} (t,x^*(t)) + \mu(t) \frac{\partial \Gamma}{\partial x_0}(t, x^*(t),u^*(t)).
\end{align*}

Expliciting the maximum condition (ii) implies
\begin{align} \nonumber
&\max\limits_{u_0\in [1-\alpha, 1+\alpha], \ u \in U} \left\{ u_0 \left( \psi_0(t) + (\psi_1(t)-\mu(t)) u_1 + \psi_2(t) u_2  \right)  \right\} \\ 
&  \label{eq: adjoint system extra adjoint variable}\qquad \qquad  = \psi_0(t) + (\psi_1(t)-\mu(t)) u^*_1(t) + \psi_2(t) u_2^*(t),
\end{align}
i.e.
\[  \psi_0(t) + (\psi_1(t)-\mu(t)) u^*_1(t) + \psi_2(t) u_2^*(t) \ge   u_0 \left( \psi_0(t) + (\psi_1(t)-\mu(t)) u_1 + \psi_2(t) u_2  \right) \]
for all $u\in U$ and $u_0 \in [1-\alpha, 1+\alpha]$, in particular for $u_0=1$. Therefore, for all $u\in U$
\[ (\psi_1(t)-\mu(t)) u^*_1(t) + \psi_2(t) u_2^*(t) \ge (\psi_1(t)-\mu(t)) u_1 + \psi_2(t) u_2 \]
which confirms that the maximum condition (b) of Theorem \ref{theorem: maximum principle} holds true.

Furthermore, the maximum condition (ii) is equivalent to
\[ u_0(\psi_0+\bar H(u))\le\psi_0+\bar H(\bar u)\qquad\forall u_0 \in [1-\alpha, 1+\alpha],\ u \in U. \]
This allows us to deduce that $\psi_0 = - \bar H(\bar u)$. Therefore, owing to the adjoint equation (\ref{eq: adjoint system extra adjoint variable}) associated to $\psi_0$, we obtain
\begin{align*}
\dot \psi_0(t) = \diff{\psi_0}{t} (t) = - \diff{\bar H(\bar u)}{t} (t)= \pdv{(-\bar H_0)}{x_0} ({x}_0^*, u_0^*, x^*,u^*,\psi_0,\psi,\mu,\lambda)
\end{align*}
However, $\bar H_0 = \bar H u_0 + \psi_0 u_0$. We deduce that
\[\diff{\bar H(u^*)}{t} = \pdv{\bar H}{t} (u^*) \]
confirming the time-transversality condition (c) of Theorem \ref{theorem: maximum principle}.

The non-triviality condition (e) is a direct consequence of (\ref{eq: result from the non-triviality}). It suffices to prove it by contradiction. Finally, condition (d) is direct from condition (iv). Therefore, Theorem \ref{theorem: maximum principle} is proved.

\vskip3ex
{\bf Proof of Condition (\ref{eq: result from the non-triviality}).} We recall that condition (\ref{eq: result from the non-triviality}) is
\begin{equation*} |\psi_1(t)-\mu(t)| + |\psi_2(t)|  > 0 \quad \text{for all } t \in [0,T^*].  \end{equation*}

 Condition (v) above implies that	
\begin{equation}\label{item: non-trivaility condition for the extended problem}
|\psi_0(t)| + |\psi_1(t)-\mu(t)| + |\psi_2(t)|  > 0, \qquad \text{for all } t\in [0,T^*].
\end{equation}
Indeed, by contradiction, if there exists some $t_1 \in [0,T^*]$ such that (\ref{item: non-trivaility condition for the extended problem}) is violated, then the conservation law (iii)
\[  \max_{u_0\in [1-\alpha, 1+\alpha], \ \varphi(u) \le 0} \{ (\psi_1-\mu)(u_1+v_1)u_0 + \psi_2(u_2+v_2)u_0 + \psi_0 u_0 \} =  \lambda\]
implies that $\lambda=0$, contradicting (v). Now to prove (\ref{eq: result from the non-triviality}), we suppose that there exists $t_1\in [0,T^*]$ such that (2) is violated. Then, replacing in the maximum condition (ii), we obtain
\[  \psi_0(t_1) = \max_{u_0\in [1-\alpha, 1+\alpha]} \psi_0(t_1) u_0  \]
which implies that $\psi_0(t_1)=0$ (recalling that $u_0\neq 0$). Therefore, (\ref{item: non-trivaility condition for the extended problem}) is violated. This proves that (\ref{eq: result from the non-triviality}) holds true.


\begin{thebibliography}{99}
	


\bibitem{Pontryagin_1962}
Pontryagin, L.S., Boltyanskii, V.G., Gamkrelidze, R.V., and Mishchenko, E.F.:
The Mathematical Theory of Optimal Processes, Interscience,
New York (1962).

\bibitem{Khalil et al. IEEE TAC 2019} Chertovskih, R., Karamzin, D., Khalil, N.T., Pereira, F.L. An Indirect Method For Regular State-Constrained Optimal Control Problems in Flow Fields. Submitted (2019).

\bibitem{jos} Joseph, A. Measuring Ocean Currents: Tools, Technologies, and Data.
Elsevier Science (2013).


\bibitem{Gamkrelidze_1960}
Gamkrelidze, R.V.: Optimal control processes with restricted phase coordinates,
Izv. Akad. Nauk SSSR Scr. Mat., 24, 315--356 (1960).

\bibitem{Berkovitz_1962}
Berkovitz, L.D.: On control problems with bounded state variables,
J. Math. Anal. Appl., 5 (1962).

\bibitem{Warga_1964} Warga, J.: Minimizing variational curves restricted to a preassigned
set, Trans. Amer. Math. Soc., 112 (1964).

\bibitem{Gamkrelidze_1965}
Gamkrelidze, R.V.: On some extremal problems in the theory of
differential equations with applications to the theory of optimal
control, SIAM J. Control, 3 (1965).

\bibitem{Dubovitskii_Milyutin_1965}
Dubovitskii, A.Ya., Milyutin, A.A.: Extremum problems in the
presence of restrictions, Zh. Vychisl. Mat. Mat. Fiz., 5(3),
pp. 395--453 (1965); U.S.S.R. Comput. Math. Math. Phys., 5(3), pp. 1--80 (1965).

\bibitem{Hestenes_1966}
Hestenes, M.R.: Calculus of Variations and Optimal Control Theory,
Wiley, New York (1966).

\bibitem{Neustadt_1966}
Neustadt, L.W.: An abstract variational theory with applications
to a broad class of optimization problems. I: General theory, SIAM
J. Control, 4 (1966).

\bibitem{Neustadt_1967}
Neustadt, L.W.: An abstract variational theory with applications
to a broad class of optimization problems. II: Applications, SIAM
J. Control, 5 (1967).

\bibitem{Halkin_1970}
Halkin, H.: A satisfactory treatment of equality and operator
constraints in the Dubovitskii-Milyutin optimization formalism
J. Optim. Theory Appl., 6, 2, (1970).

\bibitem{Russak_1970}
Russak, I.B.: On problems with bounded state variables, J. Optim.
Theory Appl., 5 (1970).

\bibitem{Ioffe_Tikhomirov_1979}
Ioffe, A.D., Tikhomirov, V.M.: Theory of Extremal Problems,
North-Holland, Amsterdam (1979).

\bibitem{Arutyunov_Tynyanskiy_1985} Arutyunov, A.V., Tynyanskiy, N.T.:
The maximum principle in a problem with phase constraints. Soviet
Journal of Computer and System Sciences, 23 (1985).


\bibitem{Dubovitskii_Dubovitskii_1985}
Dubovitskii, A.Ya., Dubovitskii, V.A.: Necessary conditions for
strong minimum in optimal control problems with degeneration of
endpoint and phase constraints. Usp. Mat. Nauk, 40(2) (1985).



\bibitem{Vinter_Fereira_1994}
Vinter, R.B., Ferreira, M.M.A.: When is the maximum principle for
state constrained problems nondegenerate? J. Math. Anal. and
Appl., 187 (1994).

\bibitem{Arutyunov_Aseev_1997}
Arutyunov, A.V., Aseev, S.M.: Investigation of the degeneracy
phenomenon of the maximum principle for optimal control problems
with state constraints. SIAM J. Control Optim., 35(3), (1997).


\bibitem{Arutyunov_2000}
Arutyunov, A.V.: Optimality conditions: Abnormal and Degenerate
Problems. Mathematics and Its Application. Kluwer Academic
Publisher (2000).

\bibitem{Vinter_2000}
Vinter, R.B.: Optimal Control. Birkhauser, Boston (2000).

\bibitem{Arutyunov_Karamzin_Pereira_2011} Arutyunov, A.V., Karamzin, D.Yu., Pereira, F.L.:
The Maximum Principle for Optimal Control Problems with State Constraints by R.V.
Gamkrelidze: Revisited. J. Optim. Theory Appl., 149 (2011).



\bibitem{Bettiol_Khalil_2016}
Bettiol, P., Khalil, N.: Non-degenerate forms of the generalized
Euler-Lagrange condition for state-constrained optimal control problems.
Variational Methods: In Imaging and Geometric Control (Vol. 18).
Walter de Gruyter GmbH \& Co KG. (2017).

\bibitem{Hager_1979}
Hager, W.W.: Lipschitz continuity for constrained processes, SIAM
J. Control and Optim., 17 (1979).

\bibitem{Maurer_1979}
Maurer, H.: Differential stability in optimal control problems.
Applied Mathematics and Optimization. 5(1), (1979).

\bibitem{Milyutin_1990}
Afanas'ev, A.P., Dikusar, V.V., Milyutin, A.A., Chukanov, S.A.:
Necessary condition in optimal control. Moscow: Nauka (1990) [in
Russian].

\bibitem{Galbraith_Vinter_2003}
Galbraith, G.N., Vinter, R.B.: Lipschitz continuity of optimal
controls for state constrained problems, SIAM J. Control and
Optim. 42(5) (2003).

\bibitem{Bonnans_2009}
Bonnans, J.F., Hermant, A. Revisiting the analysis of optimal
control problems with several state constraints. Control and
Cybernetics 38(4), pp. 1021--1052 (2009).

\bibitem{Arutyunov_2012}
Arutyunov, A.V.: Properties of the Lagrange multipliers in the
Pontryagin maximum principle for optimal control problems with
state constraints. Differential Equations, 48(12) (2012).

\bibitem{Arutyunov_Karamzin_2015} Arutyunov, A.V., Karamzin, D.Yu. On Some Continuity
Properties of the Measure Lagrange Multiplier from the Maximum Principle for State
Constrained Problems. SIAM Journal on Control and Optimization, 53(4), (2015).

\bibitem{Arutyunov_Karamzin_Pereira_2018} Arutyunov,A., Karamzin, D., Pereira, F.L. A remark on the continuity of the measure Lagrange multiplier in state constrained optimal control problems, IEEE, Decision and Control Confrence, 2018, Melbourne, Ausralia.

%



\bibitem{Bryson_1969} Bryson, E.R., Yu-Chi Ho. Applied Optimal Control. Teylor\&Francis, 1969.

\bibitem{Jacobson_1969}
Jacobson, D., Lele M. A transformation technique for optimal control problems with a state variable inequality constraint. IEEE Transactions on Automatic Control 14.5, pp. 457--464 (1969).

\bibitem{Betts_1993}
Betts, J.T., and Huffman, W.P. Path-constrained trajectory optimization using sparse sequential quadratic programming, Journal of Guidance, Control, and Dynamics, Vol. 16, No. 1, pp. 59--68 (1993).

\bibitem{Fabien_1996}
Fabien, B. C. Numerical solution of constrained optimal control problems with parameters. Applied Mathematics and Computation 80.1, pp. 43--62 (1996).

\bibitem{Maurer_2000}
Buskens, C., Maurer, H. SQP-methods for solving optimal control problems with control and state constraints: adjoint variables, sensitivity analysis and real-time control. Journal of Computational and Applied Mathematics. V. 120, pp. 85--108 (2000).

\bibitem{Vasiliev_2002}
Vasiliev, F.P. Optimization methods. Moscow, Factorial press, 2002 [in Russian]

\bibitem{Pytlak_2006}
Pytlak, R. Numerical Methods For Optimal Control Problems With State Constraints. Springer, 2006.

\bibitem{Haberkorn_2011}
Haberkorn, T., Tr\'elat, E. Convergence results for smooth regularizations of hybrid nonlinear optimal control problems, SIAM Journal on Control and Optimization (2011).

\bibitem{Keulen_2014}
van Keulen, T., Gillot, J., de Jager, B., Steinbuch, M. Solution for state constrained optimal control problems applied to power split control for hybrid vehicles. Automatica, 50(1), 187--192 (2014).


    

\bibitem{Karamzin_Pereira_2019}
Karamzin, D., Pereira, F.L. On a Few Questions Regarding the Study of State-Constrained Problems in Optimal Control. Journal of Optimization Theory and Applications, 180(1), pp. 235--255 (2019).

%
%


\bibitem{nr}
Press, W.H.; Teukolsky, S.A.; Vetterling, W.T.; Flannery, B.P.
Numerical Recipes: The Art of Scientific Computing, 3rd ed., Cambridge University Press, 2007.

	
	
%
%
%
%
\end{thebibliography}
\end{document}